     \renewcommand{\Im}{{\mathfrak{Im}\,}}
\def\-{^{-1}}
\def\'{^{\prime}}
\def\-{^{-1}}
\newcommand{\delete}[1]{}
    \theoremstyle{plain}
\numberwithin{equation}{section}
\newtheorem{thm}{Theorem}[section]
\newtheorem{lem}{Lemma}[section]
\newtheorem{rem}{Remark}[section]
\newtheorem{de}{Definition}[section]
\newtheorem{pro}{Proposition}[section]
\def\({\left( }
\def\){\right) }
\begin{document}

\title{The sup-norm of holomorphic cusp forms}
\author{Zhilin Ye}
\maketitle

\begin{abstract}
Let $f$ be a normalized holomorphic cusp form with a square-free level $N$ and weight $k$. Using a pre-trace formula, we establish a sup-norm bound of $f$ such that $\|y^kf(z)\|_{\infty} \ll N^{-1/6+\epsilon}$ where the trivial bound is $\|y^kf(z)\|_{\infty} \ll 1$. This result is an analog of a similar bound in Maa{\ss} form case. 
\end{abstract}

\section{Introduction and Main Results}
The holomorphic cusp forms with weight $k$ and level $N$ are holomorphic functions on the upper half-plane $F : \mathbb{H}^2 \rightarrow \mathbb{C}$ satisfying
$$F(\gamma z) = (cz+d)^{k}F(z),$$
when 
$$\gamma = \left(\begin{array}{cc}a & b \\c & d\end{array}\right) \in \Gamma_0(M),$$
 and vanishing at every cusp. Denote by $\mathcal{S}_{k}(N)$ the space consisting of all such functions. Any element $f\in \mathcal{S}_{k}(M)$ has a Fourier series expansion at infinity
 $$f(z) = \sum_{n\geqslant 1}\frac{\psi_f(n)}{n^{\frac{1}{2}}}(n)^{\frac{k}{2}}e(nz)$$
 with coefficients $\psi_f(n)$ satisfying
 $$\psi_f(n) \ll_f \tau(n)$$
as proven by Deligne. In this paper, $e(z)$ always means $e^{2\pi i z}$.\\

We can choose an orthonormal basis $\mathcal{B}_k(N)$ of $\mathcal{S}_k(N)$ which consists of eigenfunctions of all the Hecke operators $T_n$ with $(n,N) =1$. If a cusp form $f$ is an eigenfunction of the Hecke operator $T_n$, we denote by $\lambda_f(n)$ the eigenvalue of $f$.\\

There is a subset $\mathcal{B}^{\star}_k(N)$ of $ \mathcal{B}_k(N)$ which consists of all the \emph{newforms}. It is well known that these forms are eigenfunctions of all the Hecke operators $T_m$ even for $(m,N)\neq1$. \\

Denote by $\braket{f, g} : = \int_{\mathbb{H}^2/\Gamma_0(N)}f\bar{g}y^{k-2}dxdy$ the Petersson inner product of two forms $f$ and $g$. Then we have the following bound. 

\begin{thm}\text{(Sup-norm for holomorphic case)}\label{sup}
Let $f \in \mathcal{B}^{\star}_k(N)$ with square-free level $N$ and weight $k>2$. Then for any $\epsilon >0$ we have a bound
$$\|y^{\frac{k}{2}}f(z)\|_{\infty} \ll_{\epsilon} k^{\frac{1}{2}}N^{-\frac{1}{6}+\epsilon}\braket{f, f}^{1/2}.$$
\end{thm}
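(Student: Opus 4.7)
\medskip
\noindent\textbf{Proof proposal.}

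The plan is to apply the amplification method via a pre-trace formula, in the spirit of Iwaniec--Sarnak, now with the level $N$ playing the role of spectral parameter. Fix $z_0 = x_0 + iy_0 \in \mathbb{H}^2$ at which we seek to bound $y_0^{k/2}|f(z_0)|$; by transporting $z_0$ under $\Gamma_0(N)$ together with Atkin--Lehner involutions, we may assume $z_0$ lies in a convenient fundamental domain whose cuspidal regions have been flattened, so that $y_0$ is not too large.

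The starting point is the weight-$k$ reproducing kernel on $\mathcal{S}_k(N)$, which admits both a spectral expansion as the positive sum $\sum_{g \in \mathcal{B}_k(N)}\braket{g,g}^{-1}y_0^{k}|g(z_0)|^2$ and a geometric expansion over $\gamma \in \Gamma_0(N)$ with summand of size roughly $|cz_0+d|^{-2k}$. Positivity lets us isolate the single $g = f$ contribution on the spectral side. To amplify this term, I introduce
$$A(g) := \sum_{\substack{L \leq \ell \leq 2L \\ \ell\text{ prime},\ (\ell,N)=1}} \overline{\lambda_f(\ell)}\,\lambda_g(\ell),$$
so that Hecke multiplicativity together with Deligne's bound gives $|A(f)|^2 \gg L^{2-\epsilon}$. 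Inserting $|A(g)|^2$ into the pre-trace kernel and expanding $\lambda_g(\ell_1)\overline{\lambda_g(\ell_2)}$ via Hecke multiplicativity produces a sum of modified pre-trace kernels, each indexed by matrices of determinant $n = \ell_1\ell_2/d^2 \leq L^2$ in the relevant congruence arithmetic.

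The heart of the proof is then a uniform count, for $z_0$ in the bulk of the fundamental domain, of the form
$$\#\bigl\{\gamma \in M_2(\mathbb{Z}) : \det\gamma = n,\ N \mid c,\ |j(\gamma,z_0)|^{-2}\mathrm{Im}(\gamma z_0)\, y_0 \geq \delta\bigr\} \ll n^{\epsilon}\bigl(1 + \tfrac{n}{N\delta}\bigr),$$
in which the congruence $N \mid c$ is responsible for the saving of a full factor of $N$ relative to the count for $\SL_2(\mathbb{Z})$. Combining this count with the decay of the weight-$k$ Bessel kernel $J_{k-1}(4\pi\sqrt{mn}/c)$ at small argument (whose normalization accounts for the factor $k^{1/2}$), and summing over $\ell_1,\ell_2 \leq L$, bounds the geometric side by $k\,L^{2+\epsilon}(1 + L^2/N)\braket{f,f}$. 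Balancing this against the amplifier strength $L^{2-\epsilon}$ and optimizing at $L \asymp N^{1/3}$ yields $y_0^{k}|f(z_0)|^2 \ll_\epsilon k\,N^{-1/3+\epsilon}\braket{f,f}$, from which the claim follows by taking square roots.

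\textbf{Main obstacle.} The principal technical difficulty is establishing the counting estimate uniformly in $z_0$, with the $N$-saving intact throughout the fundamental domain. Near the cusps of $\Gamma_0(N)$, parabolic and quasi-unipotent matrices with small $|cz_0+d|$ threaten to dominate the count and destroy the gain. The remedy, following the Maa{\ss} form case, is to apply Atkin--Lehner involutions at primes dividing the square-free level to relocate $z_0$ out of any cuspidal region, and to treat the residual region where $y_0$ is large directly via the Fourier expansion of $f$. A secondary, more technical point is to calibrate the weight-$k$ reproducing kernel so that its $k$-dependence is absorbed into the single factor $k^{1/2}$ advertised in the theorem, rather than a larger power of $k$.
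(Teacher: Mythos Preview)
Your overall architecture matches the paper's: reduce to the Atkin--Lehner fundamental domain, dispose of large $y$ via the Fourier expansion, and for the bulk run an amplified pre-trace formula with the Bergman reproducing kernel, optimizing at $L\asymp N^{1/3}$. However, there are two genuine gaps.

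\textbf{The parabolic contribution is the crux, and your proposal mishandles it.} You write that near the cusps parabolic matrices threaten the count, and that ``the remedy, following the Maa{\ss} form case,'' is Atkin--Lehner relocation plus Fourier expansion at large height. The paper does exactly those two reductions, but then explicitly states that the Maa{\ss} treatment of the parabolic terms \emph{does not} transfer: the holomorphic kernel on the geometric side is $|u_\alpha(z)|^{-k}$, which for parabolic $\alpha$ sits barely above the threshold $|u_\alpha(z)|=2\sqrt{l}$ and decays only polynomially, not with the sharp cutoff one has in the Maa{\ss} spectral kernel. Consequently, even in the bulk region $N^{-1}\ll y\ll N^{-2/3}$, the sum over parabolic $\alpha$ is not negligible and cannot be absorbed into a single coarse lattice count. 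The paper devotes an entire section (Lemma~\ref{para}) to bounding $\sum_{\alpha\ \text{parabolic}}|u_\alpha(z)|^{-k}$ directly, by conjugating each parabolic $\alpha$ to upper-triangular form, parametrizing by the fixed cusp $a/c$ with $rs=N$, $s\mid c$, $r\mid t$, and then splitting the $r,s$ sum according to $r\gtrless R$ with an optimized $R=N^{5/3}y^{4/3}$. Your proposal contains no analogue of this step, and the single counting estimate you write down is not of the right shape to recover it.

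\textbf{Your stated geometric bound does not give the claimed output.} You assert the geometric side is $\ll k\,L^{2+\epsilon}(1+L^2/N)$; dividing by the amplifier $L^{2-\epsilon}$ yields $k(1+L^2/N)$, which is $\ll k$ for any $L\leqslant N^{1/2}$ and never produces a negative power of $N$. The paper's geometric side, after the three-way split into generic, upper-triangular, and parabolic, is (schematically) $\ll L + L^4 N^{-1}$ plus several $y$-dependent terms, and it is only the balance $L^{-1}\asymp L^2 N^{-1}$ that forces $L=N^{1/3}$ and delivers $N^{-1/3}$. Two smaller points: your amplifier $\sum_\ell\overline{\lambda_f(\ell)}\lambda_g(\ell)$ over primes requires a Rankin--Selberg lower bound for $\sum|\lambda_f(\ell)|^2$, not Deligne, whereas the paper uses primes and prime squares together with $\lambda_f(p)^2-\lambda_f(p^2)=1$ to get the lower bound for free; and the appearance of $J_{k-1}$ in your sketch suggests the Petersson/Kloosterman formula rather than the Bergman kernel $h(z,w)=\sum_\gamma j(\gamma,z)^{-k}(w+\gamma z)^{-k}$ actually used here.
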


\begin{rem}
This result is first claimed in \cite{HT3}. But the author is not aware of any written proof.
\end{rem}

\begin{rem}
The trivial sup-norm bound is $N^{\frac{1}{2}}$ under our normalization. The first nontrivial bound is given by Blomer and Holowinsky in \cite{BRH}. Then, several improvements are made by Harcos and Templier in \cite{HT1}, \cite{HT2} and \cite{HT3}. Moreover, a hybrid bound is obtained by Templier in \cite{T}. 
\end{rem}

The proof follows the same lines as in \cite{HT3} and \cite{T}. 
 
 \section{Preliminaries} Let $N$ be a positive square-free integer.
 
 \subsection{The Sup-norm via Fourier Expansion}
We first need to establish a bound of $f$ when $y$ is large. 
 \begin{pro}\label{trivial-sup}
 \begin{align}
y^{k/2}f(z)\braket{f, f}^{-1/2}N^{1/2} \ll 
 \begin{cases}
k^{1/4+\epsilon}y^{-1/2} + y^{1/2}k^{\epsilon -1/4}, & \text{ if }y \ll k,\\
 k^{1/4+\epsilon}y^{-1/2} + 2^{k/2}k^{\epsilon}(2\pi y)^{k/2+\epsilon}e^{-2\pi y}\Gamma(k)^{-1/2}, & \text{ if } y\gg k.
 \end{cases}
 \end{align}
 \end{pro}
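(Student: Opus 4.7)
\medskip

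\noindent\textbf{Plan.} I start from the Fourier expansion and take absolute values, rewriting
\[
|y^{k/2}f(z)|\le \sum_{n\ge 1} |\psi_f(n)|\,W(n),\qquad W(n):= n^{(k-1)/2}y^{k/2}e^{-2\pi ny}.
\]
As a function of $n\in\BR_{>0}$ the weight $W$ is a Gaussian-type bump peaked at $n_0:=(k-1)/(4\pi y)$ of width $\sigma\asymp \sqrt{k}/y$, and Stirling rewrites its peak value as
\[
W(n_0)=y^{1/2}\Bigl(\tfrac{k-1}{4\pi e}\Bigr)^{(k-1)/2}\asymp y^{1/2}k^{-1/4}\Gamma(k)^{1/2}(4\pi)^{-(k-1)/2}.
\]
I will combine this with Deligne's bound $|\psi_f(n)|\le \tau(n)\ll n^\epsilon$ and with the standard Rankin--Selberg estimate (plus the convexity bound and Hoffstein--Lockhart lower bound for $L(1,\mathrm{sym}^2 f)$), giving $\braket{f,f}\asymp N\Gamma(k)(4\pi)^{-k}$ up to $(kN)^{o(1)}$; equivalently,
\[
N^{1/2}\braket{f,f}^{-1/2}\ll (kN)^\epsilon (4\pi)^{k/2}\Gamma(k)^{-1/2}.
\]

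\medskip

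\noindent\textbf{Case 1 ($y\ll k$).} Split the $n$-sum at the effective peak interval $I:=\{n\ge 1: |n-n_0|\le C\sigma\log(kN)\}$. One has $|I|\ll (\sqrt{k}/y+1)(kN)^\epsilon$, and $W(n)\asymp W(n_0)$ on $I$; Deligne then yields
\[
\sum_{n\in I}|\psi_f(n)|W(n)\ll (kN)^\epsilon(\sqrt{k}/y+1)\,W(n_0).
\]
The complementary tail is negligible because $W$ decays super-exponentially in $(n-n_0)/\sigma$ outside $I$ and $|\psi_f(n)|$ is at most $n^\epsilon$. Multiplying by $N^{1/2}\braket{f,f}^{-1/2}$ and substituting the Stirling expression for $W(n_0)$ collapses to
\[
(kN)^\epsilon y^{1/2}(\sqrt{k}/y+1)k^{-1/4}=k^{1/4+\epsilon}y^{-1/2}+y^{1/2}k^{\epsilon-1/4}.
\]

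\medskip

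\noindent\textbf{Case 2 ($y\gg k$).} Now $n_0\ll 1$, so every positive integer lies in the exponential tail of $W$, and a geometric-series argument (after Deligne) gives
\[
\sum_n|\psi_f(n)|W(n)\ll (kN)^\epsilon W(1)=(kN)^\epsilon y^{k/2}e^{-2\pi y}.
\]
Multiplying by $N^{1/2}\braket{f,f}^{-1/2}$ and using $(4\pi)^{k/2}=2^{k/2}(2\pi)^{k/2}$ produces the exponential term $2^{k/2}k^\epsilon(2\pi y)^{k/2+\epsilon}e^{-2\pi y}\Gamma(k)^{-1/2}$. The summand $k^{1/4+\epsilon}y^{-1/2}$ is just the peak-type bound from Case 1, still valid here and in fact dominating across the transition $y\asymp k$.

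\medskip

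\noindent\textbf{Hardest step.} No single step is conceptually deep; the work is bookkeeping. One must (i) carry out the Stirling reduction identifying $W(n_0)$ with $y^{1/2}k^{-1/4}\Gamma(k)^{1/2}(4\pi)^{-(k-1)/2}$, and (ii) verify that all tail and off-peak contributions remain strictly subordinate to the peak across the full range of $y$, particularly near $y\asymp k$, where $n_0$ crosses $1$ and the ``peak'' exits the set of positive integers.
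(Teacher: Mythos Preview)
Your proposal is correct and follows exactly the approach the paper has in mind: the paper gives no self-contained proof but simply refers to \cite{X} (Xia), whose argument is precisely the Fourier-expansion-plus-Deligne analysis you carry out, together with the Rankin--Selberg/Hoffstein--Lockhart input $\braket{f,f}\asymp_{(kN)^\epsilon} N\Gamma(k)(4\pi)^{-k}$ to normalize. Your Stirling identification of $W(n_0)$ and the peak/tail decomposition are the same computations that underlie \cite{X}, so there is nothing to add.
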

 
 \begin{rem}
 This proposition is implicitly proved in \cite{X}. 
 \end{rem}
 
 \subsection{Pretrace Formula for Holomorphic Cusp Forms}
Let
\begin{align}
h(z,w) : = \sum_{\gamma \in \Gamma_0(N)}\frac{1}{(j(\gamma,z))^k}\frac{1}{(w+\gamma.z)^k},
\end{align}
where $j(\gamma,z):=cz+d$ if $\gamma = \left(\begin{array}{cc}a & b \\c & d\end{array}\right)$.\\

We have a pre-trace formula as following. See \cite{RO} Appendix $1$ for the details.
\begin{lem}\label{PT}
Let $C_k =\frac{(-1)^{k/2}\pi}{2^{(k-3)}(k-1)}$. Then
$$C_k^{-1}h(z,w) =\sum_{i=1}^{J}\frac{f_i(z)\overline{f_i(-\overline{w})}}{\braket{f_i,f_i}},$$
where the sum is over an orthonormal basis of holomorphic cusp forms of weight $k$ and level $N$.
\end{lem}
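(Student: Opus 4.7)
The plan is to identify $C_k^{-1} h(\cdot, w)$ as the reproducing kernel of $\mathcal{S}_k(\Gamma_0(N))$ at the point $-\bar w \in \mathbb{H}^2$. Concretely, once I show that
\begin{align*}
\braket{h(\cdot, w), f} \;=\; C_k\, \overline{f(-\bar w)} \qquad \text{for every } f \in \mathcal{S}_k(\Gamma_0(N)),
\end{align*}
the lemma follows immediately: write $h(\cdot, w)$, regarded as a cusp form in $z$, in the orthonormal basis $\{f_i/\braket{f_i,f_i}^{1/2}\}$, and read off the $f_i$-coefficient as $\braket{h(\cdot, w), f_i}/\braket{f_i,f_i} = C_k\, \overline{f_i(-\bar w)}/\braket{f_i,f_i}$.

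To set this up I first check that $h(z,w)$ is well defined: for $k > 2$ the defining series converges absolutely and locally uniformly on $\mathbb{H}^2 \times \mathbb{H}^2$ by the standard majorant argument that controls $\sum_\gamma |j(\gamma, z)|^{-k}$. Reindexing $\gamma \mapsto \gamma \gamma_0$ gives the weight-$k$ transformation $h(\gamma_0 z, w) = j(\gamma_0, z)^k h(z, w)$ in the $z$ variable, and the same estimate, combined with the rapid decay of $(w+z)^{-k}$ as $\operatorname{Im} z \to \infty$, shows that $h(\cdot, w)$ vanishes at every cusp, hence lies in $\mathcal{S}_k(\Gamma_0(N))$. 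For the core computation, the unfolding trick together with $\operatorname{Im}(\gamma z)^k = y^k |j(\gamma, z)|^{-2k}$ and the modularity of $f$ collapses the quotient integral to a full-space integral:
\begin{align*}
\braket{h(\cdot, w), f} \;=\; \int_{\mathbb{H}^2} \frac{\overline{f(z)}}{(w+z)^k}\, y^{k-2}\, dx\, dy.
\end{align*}
Substituting the Fourier expansion of $f$ reduces the evaluation to the integral $\int_{\mathbb{H}^2} e^{-2\pi i n \bar z} (w+z)^{-k} y^{k-2}\, dx\, dy$ for each $n \ge 1$; this I compute by (i) closing the $x$-contour in the lower half plane and collecting the residue of order $k$ at $x = -w - iy$ (the only pole, since $\operatorname{Im}(-w-iy) < 0$), and then (ii) performing the resulting Gamma integral in $y$. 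The outcome is a pure constant times $\overline{e(-n\bar w)} = e^{2\pi i n w}$, and summing over $n$ reassembles $C_k\, \overline{f(-\bar w)}$.

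The main obstacle will be the clean identification of the constant $C_k$: the sign $(-1)^{k/2}$, the power of $2$ in the denominator, and the factor $\pi/(k-1)$ all emerge from the residue-plus-Gamma calculation, and it is very easy to be off by a factor of $2$ or by a rogue $i^k$. A direct sanity check at $k = 2$ by explicit contour integration is worthwhile. A secondary, purely technical point is the justification of the interchange of sum and integration in the unfolding step; this is covered by the absolute convergence established at the outset.
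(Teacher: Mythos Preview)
Your approach is correct and is the standard one: recognize $h(\cdot,w)$ as (a constant times) the Bergman reproducing kernel of $\mathcal{S}_k(\Gamma_0(N))$, unfold the Petersson inner product, and evaluate the resulting full-space integral by residues in $x$ followed by a Gamma integral in $y$. The paper itself gives no proof of this lemma at all; it simply cites \cite{RO}, Appendix~1, where exactly this computation is carried out. So there is no ``different route'' to compare against---your outline \emph{is} the proof.

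Two small remarks on the execution. First, the mysterious factor of $2$ that turns $2^{k-2}$ into $2^{k-3}$ in $C_k$ comes from $-I\in\Gamma_0(N)$: for even $k$ the terms for $\gamma$ and $-\gamma$ in the Poincar\'e series coincide, so after unfolding one picks up $2\int_{\mathbb{H}^2}$ rather than $\int_{\mathbb{H}^2}$. You flagged the factor-of-$2$ hazard yourself, but it is worth naming its source explicitly. Second, your proposed sanity check at $k=2$ is not available, since the defining series for $h$ fails to converge absolutely there; the paper's standing hypothesis is $k>2$ precisely for this reason, and any numerical check should use $k=4$ instead.
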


Define Atkin-Lehner operators as following:
\begin{de}\label{Atkin-Lehner}
Atkin-Lehner operators of level $N$ are defined to be the elements in the set 
\begin{align}
A_0(N):=\left\{\sigma = \left(\begin{array}{cc}\sqrt{r}a & \frac{b}{\sqrt{r}} \\ \sqrt{r}s & \sqrt{r}d \end{array}\right) : \sigma \in SL_2(\mathbb{R}), r|N, N|rs, a, b, s, d \in \mathbb{Z},(a,s)=1\right\}.
\end{align}
\end{de}

A well known result is
\begin{lem}\label{A-Inv}
Let $f(z)$ be a holomorphic cusp newform of level $N$ and weight $k$. Then the function $F(z):=|y^{k/2}f(z)|$ is $A_0(N)$-invariant.
\end{lem}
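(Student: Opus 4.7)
The plan is to reduce the invariance of $F$ under $A_0(N)$ to the Atkin-Lehner eigenvalue theorem for newforms of squarefree level, after matching the normalization in Definition~\ref{Atkin-Lehner} with the classical integer Atkin-Lehner matrices.

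First I would rewrite a generic $\sigma \in A_0(N)$ in terms of an integer matrix. Since $r \mid N$ and $N \mid rs$, one has $rs = Nc_0$ for some $c_0 \in \mathbb{Z}$, so
$$\sqrt{r}\,\sigma = \begin{pmatrix} ra & b \\ Nc_0 & rd \end{pmatrix} =: W_r,$$
an integer matrix of determinant $r$ of the classical Atkin-Lehner shape (top-left entry divisible by $r$, bottom-left entry divisible by $N$, determinant $r$). The scalar $\sqrt{r}$ has no effect on the M\"obius action, so $\sigma z = W_r z$ on $\mathbb{H}^2$; and any two choices of $\sigma$ attached to the same $r \mid N$ differ by a left multiplication by an element of $\Gamma_0(N)$, under which $F$ is manifestly invariant. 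Thus the problem reduces to checking $F(W_r z) = F(z)$ for each divisor $r \mid N$.

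Next I would invoke the Atkin-Lehner eigenvalue theorem: since $N$ is squarefree, every $r \mid N$ satisfies $(r, N/r) = 1$, and hence for a newform $f \in \mathcal{B}^{\star}_k(N)$ one has
$$r^{k/2}(Nc_0 z + rd)^{-k} f(W_r z) = \eta_r f(z), \qquad \eta_r \in \{\pm 1\}.$$
Using $Nc_0 z + rd = r(sz + d)$ and the fact that the bottom row of $\sigma$ is $(\sqrt{r}\,s,\, \sqrt{r}\,d)$, this rearranges to $|f(\sigma z)| = |cz+d|^k|f(z)|$ where $(c, d)$ denotes the bottom row of $\sigma$. Combined with $\mathrm{Im}(\sigma z) = y/|cz+d|^2$ (valid since $\det \sigma = 1$), this gives
$$F(\sigma z) = \mathrm{Im}(\sigma z)^{k/2}\,|f(\sigma z)| = \frac{y^{k/2}}{|cz+d|^k}\cdot |cz+d|^k|f(z)| = F(z).$$

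The only substantive input is the Atkin-Lehner eigenvalue theorem for newforms of squarefree level, which is standard (Atkin--Lehner 1970; see also Miyake, \emph{Modular Forms}, \S4.6); everything else is bookkeeping around the scalar normalization, so I do not anticipate any real obstacle. Incidentally, the condition $(a, s) = 1$ in Definition~\ref{Atkin-Lehner} is automatic from $\det \sigma = rad - bs = 1$, so no separate check of this is needed.
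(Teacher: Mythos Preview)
Your proof is correct. The paper does not actually supply a proof of this lemma---it is stated as ``a well known result'' and left without argument---so there is nothing to compare against; your write-up is precisely the standard justification via the Atkin--Lehner eigenvalue theorem, and the bookkeeping with the $\sqrt{r}$-normalization is handled cleanly. Note that the squarefreeness of $N$, which you invoke to guarantee $(r,N/r)=1$ for every $r\mid N$, is a standing hypothesis announced at the start of Section~2, so that step is justified in context even though the lemma statement itself does not repeat it.
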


 \subsection{Amplification Method}
 Let $T_l$ be Hecke operators as defined in \cite{HT3}. Choose a basis of modular forms which consists of Hecke eigenforms. Let 
 $$\Lambda = \left\{ p \in \mathbb{Z} : p \text{ prime }, (p,N)=1, L\leqslant p<2L\right\},$$ 
also let
$$\Lambda^2 = \left\{ p^2 : p\in \Lambda\right\}.$$


We define that
\begin{de}
Let 
$$G_l(N) : = \left\{\gamma = \left(\begin{array}{cc}a & b \\c & d\end{array}\right): a,b,c,d \in \mathbb{Z}, N|c, \det(\gamma) = l\right\}.$$
Let 
$$u_{\gamma}(z) := \frac{j(\gamma,z)(\overline{z}-\gamma.z)}{\text{Im}(z)}.$$
Let
$$M(z,l,\delta) := \#\left\{\gamma \in G_l(N) : u(\gamma z, z) \leqslant \delta \right\}.$$
\end{de}

For any finite sequence of complex numbers $\{y_l\}$, we have
\begin{align}
\sum_{l}y_lT_l\(h(z,\cdot)\) & = \sum_{l}\frac{y_l}{\sqrt{l}}\sum_{\alpha \in G_l(N)} (\det \alpha)^{k/2}\frac{1}{j(\alpha,z)^k}\frac{1}{(\cdot+\alpha.z)^k}.
\end{align}
Otherwise, by Lemma \ref{PT}, we have 
\begin{align}
\sum_{l}y_lT_l\(h(z,\cdot)\)  = C_k\sum_{l}y_l\sum_{i=1}^{J}\frac{T_l\(f_i(z)\)\overline{f_i(-\overline{\cdot})}}{\braket{f_i,f_i}}= C_k\sum_{l}y_l\sum_{i=1}^{J}\frac{\lambda_i(l)f_i(z)\overline{f_i(-\overline{\cdot})}}{\braket{f_i,f_i}}.
\end{align}
Hence, by chosing $\cdot = -\overline{z}$, we have
\begin{align}\label{eq8.1}
C_k\sum_{i=1}^{J}\sum_{l}y_l\lambda_i(l)\frac{y^kf_i(z)\overline{f_i(z)}}{\braket{f_i,f_i}}  =  \sum_{l}\frac{y_l}{\sqrt{l}}\sum_{\alpha \in G_l(N)} (\det \alpha)^{k/2}\frac{y^k}{j(\alpha,z)^k}\frac{1}{(-\overline{z}+\alpha.z)^k} = \sum_{l}y_ll^{\frac{k-1}{2}}\sum_{\alpha \in G_l(N)} u_{\alpha}(z)^{-k}.
\end{align}

We then establish an "amplified" version of the formula above. By the multiplicity of the erigenvalues, for any sequence of complex numbers ${x_l}$, we get
\begin{align}\label{amp}
C_k\sum_{i=1}^{J}\left|\sum_{l}x_l\lambda_i(l)\right|^2\frac{|y^{k/2}f_i(z)|^2}{\braket{f_i,f_i}}&= C_k\sum_{i=1}^{J}\sum_{l_1,l_2}x_{l_1}\overline{x_{l_2}}\lambda_i(l_1)\overline{\lambda_i(l_2)}\frac{|y^{k/2}f_i(z)|^2}{\braket{f_i,f_i}}\\
& = C_k\sum_{i=1}^{J}\sum_{l}y_{l}\lambda_i(l)\frac{|y^{k/2}f_i(z)|^2}{\braket{f_i,f_i}}\\
& = \sum_{l}y_ll^{\frac{k-1}{2}}\sum_{\alpha \in G_l(N)} u_{\alpha}(z)^{-k},
\end{align}
where 
$$y_l : = \sum_{\substack{d|(l_1,l_2) \\ l = l_1l_2/d^2}}x_{l_1}\overline{x_{l_2}}.$$

Now, let 
$$x_l := \begin{cases} \text{sign}(\lambda_i(l)) & \text{ if } l \in \Lambda \cup \Lambda^2\\ 0 & \text{ otherwise }\end{cases}.$$
We therefore have
$$\left|\sum_{l}x_l\lambda_i(l)\right|\gg_{\epsilon} L^{1-\epsilon}.$$
Indeed, this follows from the relation $\lambda_{i}(l)^2 - \lambda_{i}(l^2) = 1$, which implies that $\max\left\{\left|\lambda_{i}(l)\right|, \left|\lambda_{i}(l^2)\right|\right\} \geqslant 1/2$.\\

As the way in \cite{HT3}, we split the counting of matrices $\gamma = \left(\begin{array}{cc}a & b \\c & d\end{array}\right)$ as
$$M = M_{*}+M_u+M_p$$
according to whether $c \neq 0$ and $(a+d)^2 \neq 4l$ (generic), or $c=0$ and $a\neq d$ (upper-triangular), or $(a+d)^2 = 4l$ (parabolic).\\

Moreover, we have 
\begin{lem}\label{lower}
If $\delta < 2\sqrt{l}$, $M(z,l,\delta) =0.$\\
\end{lem}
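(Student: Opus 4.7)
Reading the definition of $M(z,l,\delta)$ in light of the pre-trace formula, the quantity $u(\gamma z, z)$ is naturally interpreted as the modulus $|u_\gamma(z)|$ of the object $u_\gamma(z) = j(\gamma,z)(\bar z - \gamma\cdot z)/\text{Im}(z)$ that appears in the sum (8.1). So the goal is to show that $|u_\gamma(z)| \geq 2\sqrt{l}$ uniformly in $\gamma \in G_l(N)$ and $z \in \mathbb{H}^2$, which makes the condition $|u_\gamma(z)|\leq \delta<2\sqrt{l}$ vacuous and hence forces $M(z,l,\delta)=0$.

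My plan is to prove the sharper pointwise identity
\begin{align}
|u_\gamma(z)|^2 = \frac{|j(\gamma,z)|^2\,|z-\gamma\cdot z|^2}{\text{Im}(z)^2} + 4l.
\end{align}
The starting point is the elementary one-line fact that for any $w \in \mathbb{H}^2$, expanding in real and imaginary parts gives
\begin{align}
|\bar z - w|^2 - |z - w|^2 = 4\,\text{Im}(z)\,\text{Im}(w).
\end{align}
Specializing this to $w = \gamma\cdot z$, multiplying through by $|j(\gamma,z)|^2/\text{Im}(z)^2$, and invoking the standard transformation law $\text{Im}(\gamma\cdot z) = l\,\text{Im}(z)/|j(\gamma,z)|^2$ (valid because $\det(\gamma)=l$), the extra term collapses to exactly $4l$, producing the displayed identity.

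Since $|z - \gamma\cdot z|^2 \geq 0$, the identity yields $|u_\gamma(z)|^2\geq 4l$, hence $|u_\gamma(z)|\geq 2\sqrt{l}$, and the lemma follows. There is no genuine obstacle — this is a short algebraic computation — and the only real subtlety is bookkeeping: one should verify that $|u_\gamma(z)|$ depends only on the pair $(\gamma\cdot z, z)$ together with $l$ (as the derived identity makes evident), so that the identification with $u(\gamma z, z)$ in the definition of $M(z,l,\delta)$ is consistent.
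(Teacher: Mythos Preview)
Your proof is correct, and it takes a genuinely different route from the paper. The paper splits into two cases according to whether $|\mathrm{Trace}(\gamma)|\geq 2\sqrt{l}$ or not: in the first case it reads off $|u_\gamma(z)|\geq |\Im u_\gamma(z)|=|a+d|$; in the second (elliptic) case it conjugates $\gamma$ by some $g\in SL_2(\mathbb{R})$ to a scaled rotation $\sqrt{l}\begin{pmatrix}\cos\theta&\sin\theta\\-\sin\theta&\cos\theta\end{pmatrix}$, checks the conjugation invariance $|u_{g^{-1}\gamma g}(w)|=|u_\gamma(gw)|$, and then computes directly that the resulting expression is $\geq 4l$. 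Your argument bypasses the case split entirely by proving the clean identity
\[
|u_\gamma(z)|^2=\frac{|j(\gamma,z)|^2\,|z-\gamma\cdot z|^2}{\Im(z)^2}+4l,
\]
which is in fact $|u_\gamma(z)|^2=4l+l\cdot\dfrac{|z-\gamma z|^2}{\Im(z)\,\Im(\gamma z)}$, i.e.\ $4l$ plus $l$ times the standard point-pair invariant. This is more conceptual and shorter; it also makes transparent that equality $|u_\gamma(z)|=2\sqrt{l}$ holds exactly when $\gamma$ fixes $z$, something the paper's case-by-case computation obscures. The paper's approach, on the other hand, has the minor advantage of isolating the imaginary part $\Im u_\gamma(z)=a+d$ explicitly, a fact reused later in the lattice-point counting.
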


\begin{proof}
It suffices to show that $|u_{\gamma}(z)| \geqslant 2\sqrt{l}$ when $\gamma \in G_l(N)$. When $\mathrm{Trace}(\gamma) \geqslant 2\sqrt{l}$, we have $|u_{\gamma}(z)| \geqslant |\Im u_{\gamma}(z)| =  \mathrm{Trace}(\gamma) \geqslant 2\sqrt{l}$. When $\mathrm{Trace}(\gamma) < 2\sqrt{l}$, let $g\in SL_2(\mathbb{R})$ be a matrix such that 
$$g^{-1}\gamma g= \left(\begin{array}{cc}\sqrt{l}\cos \theta & \sqrt{l}\sin \theta \\-\sqrt{l}\sin \theta & \sqrt{l}\cos \theta\end{array}\right),$$
where $\theta \in \mathbb{R}$. By a direct calculation, we have $|u_{g^{-1}\gamma g}(z)| = |u_{\gamma}(gz)|$. Let $w = g^{-1}z = x+iy$, then
\begin{align}
|u_{\gamma}(z)|^2 = |u_{g^{-1}\gamma g}(w)|^2 = ly^{-2}|\sin^2\theta(1+|w|^2)^2 + 4y^2\cos^2\theta| \geqslant 4l.
\end{align}
\end{proof}

\begin{rem}
A calculation with full details can be found in \cite{RO} Appendix B.
\end{rem}

By \eqref{amp}, we have
\begin{align}\label{supineq}
C_kL^{2-\epsilon}\frac{\left|y^{k/2}f_i(z)\right|^2}{\braket{f_i,f_i}} & \ll \sum_{l}|y_l|l^{\frac{k-1}{2}}\sum_{\alpha \in G_l(N)} |u_{\alpha}(z)|^{-k}\\
&=\sum_{l}|y_l|l^{\frac{k-1}{2}}\sum_{\substack{\alpha \in G_l(N)\\ \alpha \text{ parabolic}}} |u_{\alpha}(z)|^{-k} +\sum_{l}|y_l|l^{\frac{k-1}{2}}\sum_{\substack{\alpha \in G_l(N)\\ \alpha \text{ generic or upper-triangular}}} |u_{\alpha}(z)|^{-k}\\
&\ll \sum_{l}|y_l|l^{\frac{k-1}{2}}\sum_{\substack{\alpha \in G_l(N)\\ \alpha \text{ parabolic}}} |u_{\alpha}(z)|^{-k} + \sum_{l}|y_l|l^{\frac{k-1}{2}}\int_{0}^{\infty}\delta^{-k}d \(M_u+M_*\)(z,l,\delta)\\
&\ll \sum_{l}|y_l|l^{\frac{k-1}{2}}\sum_{\substack{\alpha \in G_l(N)\\ \alpha \text{ parabolic}}} |u_{\alpha}(z)|^{-k} +k\sum_{l}|y_l|l^{\frac{k-1}{2}}\int_{2\sqrt{l}}^{\infty}\frac{\(M_u+M_*\)(z,l,\delta)}{\delta^{k+1}}d\delta,
\end{align}
where the last step follows from integration by parts and Lemma \ref{lower}.\\

The remaining problem is to establish an upper-bound for $M_*$, $M_u$ and the sum over parabolic matrices. 

\subsection{Counting Lattice Points}
As in \cite{HT3}, we estimate the sum of $M_*(z,l,\delta)$ and the sum of $M_u(z,l,\delta)$ separately.\\

We state two lemmas in \cite{HT3} below.
\begin{lem}[\cite{HT3} Lemma 2.1]\label{Countlem1}
Let $\Theta$ be a eucilidean lattice of rank $2$ and $D$ be a disc of radius $R>0$ in $\Theta \otimes_{\mathbb{Z}}\mathbb{R}$ (not neceesarily centered at $0$). If $\lambda_1 \leqslant \lambda_2$ are the successive minima of $\Theta$, then
\begin{align}\label{CP}
\#(\Theta\cap D) \ll 1+\frac{R}{\lambda_1}+\frac{R^2}{\lambda_1\lambda_2}.
\end{align}
\end{lem}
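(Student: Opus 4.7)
The plan is to reduce the problem to counting lattice points along a well-chosen basis, using Minkowski's theorem on successive minima in rank two.

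First I would invoke the rank-two case of Minkowski's second theorem, which guarantees the existence of a $\mathbb{Z}$-basis $v_1, v_2$ of $\Theta$ with $|v_1| = \lambda_1$, $|v_2| \asymp \lambda_2$, and such that the angle $\theta$ between $v_1$ and $v_2$ satisfies $\pi/3 \leq \theta \leq 2\pi/3$. (For rank two this is just Gauss reduction; the existence of such a basis realizing both successive minima, up to a constant, is standard.) Write a general lattice point as $p = m_1 v_1 + m_2 v_2$ with $m_1, m_2 \in \mathbb{Z}$.

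Next I would decompose $p$ using the oblique coordinates. Project onto the line perpendicular to $v_1$: the perpendicular component of $p$ equals $m_2 \, |v_2| \sin\theta$, which has absolute value $\asymp |m_2| \lambda_2$. For $p \in D$, this perpendicular component must lie within $R$ of the perpendicular component of the center of $D$, so the number of admissible values of $m_2$ is at most $O(1 + R/\lambda_2)$. For each such fixed $m_2$, the points $m_1 v_1 + m_2 v_2$ with $m_1 \in \mathbb{Z}$ form an arithmetic progression of step $\lambda_1$ along a line, and the intersection of any line with the disc $D$ has length at most $2R$. Hence the number of admissible $m_1$ is $O(1 + R/\lambda_1)$.

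Multiplying these two estimates gives
\begin{equation}
\#(\Theta \cap D) \ll \Bigl(1 + \frac{R}{\lambda_1}\Bigr)\Bigl(1 + \frac{R}{\lambda_2}\Bigr) = 1 + \frac{R}{\lambda_1} + \frac{R}{\lambda_2} + \frac{R^2}{\lambda_1 \lambda_2}.
\end{equation}
Since $\lambda_1 \leq \lambda_2$, the middle term $R/\lambda_2$ is dominated by $R/\lambda_1$, and \eqref{CP} follows. The only subtle point is that the disc $D$ is not assumed to be centered at the origin, so one must be careful to use the \emph{length} of a chord rather than the radius twice; this is handled by the observation that both the perpendicular-projection interval and each fiber intersection have diameter bounded by $2R$ regardless of where $D$ sits, so the translation of the center costs only constants. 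There is no serious obstacle here — the result is elementary once a reduced basis is in hand.
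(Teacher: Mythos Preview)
The paper does not supply its own proof of this lemma: it is quoted verbatim from \cite{HT3}, Lemma~2.1, and no argument is given beyond the citation. Your proof is correct and is the standard one --- Gauss reduction to obtain a basis realizing the successive minima with angle in $[\pi/3,2\pi/3]$, followed by the fiberwise count $(1+R/\lambda_1)(1+R/\lambda_2)$ --- so there is nothing to compare against and no gap to report.
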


\begin{lem}[\cite{HT2} Lemma 1]\label{Countlem2}
Let $z \in A_0(N) \backslash \mathbb{H}^2$. Then we have
\begin{align}\label{zregion}
\text{Im }z \geqslant \frac{\sqrt{3}}{2N}
\end{align}
and for any $(c,d)\in\mathbb{Z}^2$ distinct from $(0,0)$ we have 
\begin{align}\label{lattice}
|cz+d|^2\geqslant \frac{1}{N}.
\end{align}
\end{lem}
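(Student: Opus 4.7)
The plan is to split the two inequalities. For the height bound I exploit the inclusion $\Gamma_0(N)\subset A_0(N)$ to reduce to a fundamental domain for $\Gamma_0(N)$, where the standard construction makes the lower bound on imaginary parts immediate. For the lattice bound I argue by contradiction, using the height-maximising property of the fundamental domain.

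\textbf{Height bound.} A fundamental domain for $A_0(N)$ may be chosen inside one for $\Gamma_0(N)$, since the latter group is contained in the former (take $r=1$ and $s$ a multiple of $N$ in Definition \ref{Atkin-Lehner}). Writing the fundamental domain for $\Gamma_0(N)$ as a union of translates $\sigma\mathcal{F}$ of the standard $SL_2(\BZ)$ fundamental domain $\mathcal{F}$ (on which $\mathrm{Im}\geq \sqrt 3/2$) by coset representatives $\sigma$ of $\Gamma_0(N)\backslash SL_2(\BZ)$, the transformation rule $\mathrm{Im}(\sigma w)=\mathrm{Im}(w)/|cw+d|^2$ together with an elementary bound for $|cw+d|^2$ over $w\in\mathcal{F}$ yields the lower bound $\sqrt{3}/(2N)$, the extremal case being the cusp at $0$.

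\textbf{Lattice bound.} Suppose, for contradiction, that $|cz+d|^2<1/N$ for some primitive $(c,d)\in\BZ^2$ (non-primitive pairs reduce immediately to primitive ones). The plan is to construct $\sigma\in A_0(N)$ whose bottom row, up to a scalar, is $(c,d)$, so that $\mathrm{Im}(\sigma z)>\mathrm{Im}(z)$ and $z$ fails to be height-maximal in its $A_0(N)$-orbit. Concretely, set $g=\gcd(c,N)$, $r=N/g$, and attempt
\[
\sigma=\begin{pmatrix}\sqrt{r}\,a & b/\sqrt{r}\\ \sqrt{r}\,c & \sqrt{r}\,d\end{pmatrix}
\]
with integers $a,b$ chosen so that $rad-bc=1$. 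The bottom row then satisfies $|C_\sigma z+D_\sigma|^2=r|cz+d|^2 < r/N = 1/g \leq 1$, as required. I then check the conditions in Definition \ref{Atkin-Lehner}: $N\mid rc$ amounts to $g\mid c$, true by construction; the Bezout equation $rad-bc=1$ is solvable precisely when $\gcd(rd,c)=1$; and the remaining constraint $(a,c)=1$ follows automatically from $rad\equiv 1\pmod c$.

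\textbf{Main obstacle.} The decisive technical check is $\gcd(rd,c)=1$, which combines primitivity $\gcd(c,d)=1$ with $\gcd(r,c)=1$. The latter coprimality is where the square-freeness of $N$ enters crucially: it forces $\gcd(g,r)=\gcd(g,N/g)=1$, and then the maximality of $g$ in $g=\gcd(c,N)$ gives $\gcd(r,c/g)=1$, whence $\gcd(r,c)=\gcd(r,g(c/g))=1$. All the remaining Atkin-Lehner divisibility constraints reduce to pairwise coprimality statements of this kind, making square-freeness the single hypothesis that allows the construction of $\sigma$ to go through for every primitive $(c,d)$.
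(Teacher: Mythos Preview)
The paper does not give its own proof of this lemma; it simply quotes it from \cite{HT2}. So there is nothing in the present paper to compare against, and your argument should be judged on its own merits.

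Your proof of the lattice bound \eqref{lattice} is correct and is essentially the argument in \cite{HT2}: construct an Atkin--Lehner operator with bottom row proportional to $(c,d)$ and use height-maximality. Your identification of square-freeness as the hypothesis that forces $\gcd(r,c)=1$ (via $\gcd(g,N/g)=1$) is exactly the point, and matches the remark following the lemma in the paper.

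Your argument for the height bound \eqref{zregion}, however, has a genuine gap. You write the $\Gamma_0(N)$-fundamental domain as $\bigcup_\sigma \sigma\mathcal{F}$ and assert an ``elementary bound for $|cw+d|^2$ over $w\in\mathcal{F}$'' giving $|cw+d|^2\leqslant N$. This is false for standard coset representatives: already for $N=p$ prime one may take $\sigma=\left(\begin{smallmatrix}0&-1\\1&j\end{smallmatrix}\right)$ with $0\leqslant j\leqslant p-1$, and then $|w+j|^2$ is of size $j^2$, which can be as large as $p^2$. Points in those translates $\sigma\mathcal{F}$ genuinely have imaginary part much smaller than $\sqrt{3}/(2N)$; the reason this does not contradict the lemma is that such points are \emph{not} in the $A_0(N)$-fundamental domain, but your argument never uses that.

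The clean fix is to reverse the order of the two parts: prove \eqref{lattice} first (as you do), and then deduce \eqref{zregion} from it. Given $z$ height-maximal in its $A_0(N)$-orbit, choose $\gamma\in SL_2(\BZ)$ with $\gamma z\in\mathcal{F}$, so $\mathrm{Im}(\gamma z)\geqslant\sqrt{3}/2$. Writing the bottom row of $\gamma$ as $(c,d)\neq(0,0)$, the lattice bound gives $|cz+d|^2\geqslant 1/N$, hence $\mathrm{Im}(\gamma z)=\mathrm{Im}(z)/|cz+d|^2\leqslant N\,\mathrm{Im}(z)$, and \eqref{zregion} follows.
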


\begin{rem}
This is the where the square-free condition comes into play. \eqref{lattice} is not true when $N = q^2$ for an integer $q$. For example, let $z = \frac{1}{q}+i\frac{\sqrt{3}}{2q^2}$, then it is easy to check that $z$ is in the fundamental domain but the lattice generated by $(1,z)$ behaves badly.
\end{rem}

Then, we have  
\begin{lem}
For any $z = x+iy \in A_0(N) \backslash \mathbb{H}^2$ and $1 \leqslant \Lambda \leqslant N^{O(1)}$, $M_*(z,l,\delta)=0$ if $2\delta<Ny$. Moreover
\begin{align}\label{m*1}
\sum_{1\leqslant l \leqslant \Lambda}M_{*}(z,l,\delta) \ll \(\frac{\delta^2}{Ny}+\frac{\delta^3}{N^{1/2}}+\frac{\delta^4}{N}\)N^{\epsilon},
\end{align}
\begin{align}\label{m*2}
\sum_{\substack{1\leqslant l \leqslant \Lambda\\ l \text{ square}}}M_{*}(z,l,\delta) \ll \(\frac{\delta}{Ny}+\frac{\delta^2}{N^{1/2}}+\frac{\delta^3}{N}\)N^{\epsilon}.
\end{align}
For $1 \leqslant l_1 \leqslant \Lambda \leqslant N^{O(1)}$,
\begin{align}\label{m*3}
\sum_{1\leqslant l \leqslant \Lambda}M_{*}(z,l_1l^2,\delta) \ll \(\frac{\delta}{Ny}+\frac{\delta^2}{N^{1/2}}+\frac{\delta^3}{N}\)N^{\epsilon}.
\end{align}
\end{lem}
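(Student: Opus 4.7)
The plan is the Harcos--Templier lattice-counting route. After reducing to the case $\delta \geq Ny/2$, I would parametrize a generic $\gamma \in G_l(N)$ with entries $a, b, c, d$ by the trace $m = a+d$ together with $n = a-d$, $b$, $c$, and count lattice points via Lemma \ref{Countlem1}, using Lemma \ref{Countlem2} to control the first successive minimum.

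For the vanishing statement $M_*(z,l,\delta) = 0$ when $2\delta < Ny$, observe that for generic $\gamma$ one has $c \neq 0$ and $N \mid c$, so $|c| \geq N$. Writing $|u_\gamma(z)| = |cz+d|\cdot |\bar z - \gamma z|/y$ and using $|\bar z - \gamma z| \geq y + \Im(\gamma z) > y$ gives $|u_\gamma(z)|^2 > |cz+d|^2 \geq c^2 y^2 \geq N^2 y^2$, so $|u_\gamma(z)| > Ny$; the hypothesis therefore leaves no contribution.

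For the main count, separating real and imaginary parts of $\bar z (cz+d) - (az+b)$ shows that $|u_\gamma(z)| \leq \delta$ is equivalent to
\begin{align}
|m| \leq \delta, \qquad \bigl| c|z|^2 - nx - b \bigr| \leq y\sqrt{\delta^2 - m^2},
\end{align}
while the determinant equation reads $n^2 + 4bc = m^2 - 4l$. The algebraic identity
\begin{align}
(n - 2cx)^2 + 4c^2 y^2 \;=\; (n^2 + 4bc) + 4c\bigl(c|z|^2 - nx - b\bigr)
\end{align}
then yields $4c^2 y^2 \leq m^2 + 4|c| y \delta$, from which $|c| y \ll \delta$ and therefore $c$ runs over at most $\ll 1 + \delta/(Ny)$ multiples of $N$. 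For fixed $(m, c)$ I would count integer pairs $(n, b)$ with $n \equiv m \pmod 2$ subject to the strip condition and to the annular condition $n^2 + 4bc \in [m^2 - 4\Lambda, m^2 - 4]$, applying Lemma \ref{Countlem1} to the appropriate sublattice of $\mathbb{Z}^2$ after enclosing the strip-annulus intersection in a disc of radius $R$. The crucial ingredient here is the lower bound $\lambda_1 \gg N^{-1/2}$ on the first successive minimum, which follows from Lemma \ref{Countlem2}; this is the one place where the square-free hypothesis on $N$ is used.

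Summing the three summands of \eqref{CP} over $c$ and $m$ then produces the three terms in \eqref{m*1}: the constant $1$ yields $\delta \cdot \delta/(Ny) = \delta^2/(Ny)$; the term $R/\lambda_1$ yields $\delta^3/N^{1/2}$; and $R^2/(\lambda_1\lambda_2)$ yields $\delta^4/N$. For \eqref{m*2} and \eqref{m*3}, the restriction of $l$ to squares, respectively to $l_1 \cdot (\text{square})$, means that the admissible values of $n^2 + 4bc$ form a set of cardinality $O(\sqrt{\Lambda})$ rather than $O(\Lambda)$, and tracking this saving through the lattice count removes one power of $\delta$ from each of the three terms. I expect the main obstacle to be the careful bookkeeping of $R$ across the different regimes, so that once summed over $c$ and $m$ the $\Lambda$-dependence cancels and the three $\delta$-powers appear cleanly; the delicate square-freeness input from Lemma \ref{Countlem2} is what secures the factors $N^{-1/2}$ and $N^{-1}$ in the last two terms.
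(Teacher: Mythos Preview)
Your route for the general-$l$ bound \eqref{m*1} matches the paper's: both parametrize by $c$ (with $|c|y\ll\delta$, hence $\ll\delta/(Ny)$ nonzero multiples of $N$), count the pair $(a-d,b)$ as a point of the lattice $\langle 1,z\rangle\subset\mathbb{C}$ inside a disc of radius $\asymp\delta y$ (the paper obtains this directly from $|(a-d)z+b-cz^2|\ll\delta y$; your algebraic identity for $(n-2cx)^2$ recovers the same disc), and then multiply by $\ll\delta$ choices of $m=a+d$. Your vanishing argument when $2\delta<Ny$ is also fine and equivalent to the paper's.

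Where your sketch diverges from the paper, and where it has a genuine gap, is the saving of one power of $\delta$ in \eqref{m*2} and \eqref{m*3}. You propose that restricting $l$ to squares shrinks the set of admissible values of $n^2+4bc$ from $O(\Lambda)$ to $O(\sqrt{\Lambda})$ and that ``tracking this saving through the lattice count removes one power of $\delta$''. But the bound from Lemma~\ref{Countlem1} counts points in a disc and is blind to how many level sets of $n^2+4bc$ are permitted; nothing in your outline converts the $\sqrt{\Lambda}$-versus-$\Lambda$ observation into a $\delta$-saving, and indeed $\Lambda$ never appears in either estimate. The paper instead reverses the order of summation: it fixes the triple $(a-d,b,c)$ first (this is the lattice count, done once, independent of $m$), and only then counts pairs $(a+d,l)$ subject to the determinant relation
\[
(a-d)^2+4bc \;=\; (a+d)^2-4l.
\]
When $l=k^2$ is a square this reads $(a+d-2k)(a+d+2k)=(a-d)^2+4bc$, and the divisor bound gives $\ll N^{\epsilon}$ solutions $(a+d,k)$; when $l=l_1 l_2^2$ with $l_1$ fixed square-free it is a Pell equation in $(a+d,l_2)$, whose solutions grow geometrically and hence number $\ll N^{\epsilon}$ in the relevant range. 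This is precisely what replaces the factor $\delta$ (the naive count of $a+d$) by $N^{\epsilon}$. Reorganize your argument to fix $(n,b,c)$ before counting $(m,l)$ and invoke these two arithmetic facts; the remainder of your outline then goes through.
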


\begin{proof}
By the definition of $M_{*}$, we count the number of matrices $\alpha = \left(\begin{array}{cc}a & b \\c & d\end{array}\right)$ such that 
\begin{align}\label{or}
|u_{\alpha}(z)| = |az+b-\overline{z}(cz+d)|\frac{1}{y} =|l+|cz+d|^2 -(cz+d)(a+d)|\frac{1}{cy} \leqslant \delta.
\end{align}
By considering the imaginary part, we obtain
\begin{align}\label{a+d}
|a+d| \leqslant \delta.
\end{align}
By considering the real part, we obtain
\begin{align}
|l+|cz+d|^2 -(cx+d)(a+d)|\leqslant \delta|cy|.
\end{align}
We therefore have
\begin{align}
|l+|cz+d|^2|\leqslant \delta\(|cy|+|cx+d|\) \leqslant 2\delta|cz+d|.
\end{align}
Since $l>0$, we obtain that
\begin{align}
|cz+d|\leqslant 2\delta.
\end{align}
Furthermore, by the inequalities above, we get $|cy| \leqslant 2\delta$.\\

Otherwise, we have that $N|c$ and $c\neq 0$ in this case. Hence when $2\delta/y<N$, $M_* = 0$. This proves our first claim.\\

By \eqref{or}, 
\begin{align}
|az+b-\overline{z}(cz+d)| = |(a-d)z + b -cz^2 + (cz+d)(z-\overline{z})| \leqslant \delta y,
\end{align}
which implies that
\begin{align}\label{mainineq}
|(a-d)z+b -cz^2|\ll \delta y.
\end{align}

Consider the lattice $\braket{1,z}$ inside $\mathbb{C}$. Its covolume equals $y$. By \eqref{lattice}, the shortest distance between two different points in the lattice is at least $N^{-1/2}$. In \eqref{mainineq}, we are counting lattice points $(a-d,b)$ in a disc of volume $\ll \delta^2y^2$ centered at $cz^2$. Thus, by \eqref{CP}, there are $\ll 1+\frac{\delta y}{N^{-1/2}} + \frac{\delta^2y^2}{y}$ possible pairs $(a-d,b)$ for each $c$.\\

When $l$ is a general number, since $|a+d|\ll \delta$, we have $\ll \delta$ many possible $a+d$ for a given triple $(a-d,b,c)$.\\

Now, consider
\begin{align}\label{det}
(a-d)^2 + 4bc = (a+d)^2 -4l.
\end{align}
When $l$ is a square, for any given triple $(a-d,b,c)$, the number of pairs $(a+d,l)$ satisfying \eqref{det} is $\ll N^{\epsilon}$.\\

When $l = l_1l_2^2$ and $l_1$ is square-free, \eqref{det} becomes a Pell equation. So the solution is a power of fundamental unit which is always greater than $\frac{1+\sqrt{5}}{2}$. Therefore, the number of pairs $(a+d, l_2)$ satisfying \eqref{det} is $\ll N^{\epsilon}$ .\\

Finally, since $c \ll \delta /y$ and $N|c$, we have $\ll \delta /Ny$ possible values for $c$ for all these three cases above. For each $c$, we have  $\ll 1+\frac{\delta y}{N^{-1/2}} + \frac{\delta^2y^2}{y}$ possible pairs $(a-d,b)$. For each $(a-d,b,c)$, we have $\ll \delta$ possible $(a+d,l)$ for the case in \eqref{m*1}. And for the cases in \eqref{m*2} and \eqref{m*3}, we have $\ll N^{\epsilon}$ possible $(a+d,l)$. The proof is completed.
\end{proof}

\begin{lem}\label{CON}
For any $z=x+iy \in A_0(N) \backslash \mathbb{H}^2$ and $1\leqslant  \Lambda \leqslant N^{O(1)}$, the following estimations hold true when $l_1,l_2$ and $l_3$ runs over primes.
\begin{align}\label{mu0}
\sum_{1 \leqslant l_1\leqslant \Lambda}M_u(z,l_1,\delta) \ll \(1+\delta N^{1/2}y + \delta^2y\)N^{\epsilon},
\end{align}
\begin{align}\label{mu1}
\sum_{1 \leqslant l_1l_2 \leqslant \Lambda}M_u(z,l_1l_2,\delta)\ll \(\Lambda+\Lambda\delta N^{1/2}y + \Lambda\delta^2y\)N^{\epsilon},
\end{align}
\begin{align}\label{mu2}
 \sum_{1 \leqslant l_1l_2 \leqslant \Lambda}M_u(z,l_1l_2^2,\delta)\ll \(\Lambda+\Lambda\delta N^{1/2}y + \Lambda\delta^2y\)N^{\epsilon},
\end{align}
\begin{align}\label{mu3}
\sum_{1 \leqslant l_1l_2 \leqslant \Lambda}M_u(z,l_1^2l_2^2,\delta) \ll \(1+\delta N^{1/2}y + \delta^2y\)N^{\epsilon}.
\end{align}
\end{lem}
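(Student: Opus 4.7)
The plan is to parametrize each upper-triangular $\alpha\in G_l(N)$ (so $c=0$) with $a\neq d$ by the integer triple $(u,v,b)$ where $u=d-a$ and $v=a+d$, then count admissible pairs $(u,b)$ via the lattice $\mathbb{Z}+\mathbb{Z}z\subset\mathbb{C}$ together with Lemma \ref{Countlem1}, and finish with a divisor-type count on the admissible $v$'s. A direct calculation gives
\begin{align}
|u_\alpha(z)|^2=\frac{((d-a)x-b)^2}{y^2}+(a+d)^2,
\end{align}
so $|u_\alpha(z)|\leqslant\delta$ implies both $|v|\leqslant\delta$ and $|ux-b|\leqslant\delta y$. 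Since $ad=l>0$ forces $(a+d)^2>(a-d)^2$, we have $|u|<|v|\leqslant\delta$; moreover $v\equiv u\pmod{2}$, $u\neq 0$, and $(v^2-u^2)/4=l$ must lie in the prescribed set.

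Under the map $(u,b)\mapsto b-uz$, the lattice $\mathbb{Z}^2$ identifies with the rank-two lattice $\mathbb{Z}+\mathbb{Z}z\subset\mathbb{C}$ of covolume $y$. The bounds above give
\begin{align}
|b-uz|^2=(ux-b)^2+u^2y^2\leqslant 2\delta^2 y^2,
\end{align}
so $(u,b)$ corresponds to a lattice point inside the disc of radius $\sqrt{2}\,\delta y$ centered at the origin. Bound \eqref{lattice} gives $\lambda_1\geqslant N^{-1/2}$, and Minkowski yields $\lambda_1\lambda_2\asymp y$, so Lemma \ref{Countlem1} produces
\begin{align}
\#\{(u,b)\in\mathbb{Z}^2:|b-uz|\leqslant\sqrt{2}\,\delta y\}\ll 1+\delta yN^{1/2}+\delta^2 y.
\end{align}

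It remains, for each such $(u,b)$, to count the compatible $v$'s together with their factorizations, by viewing the identity $(v-u)(v+u)=4l$ as a divisor count. For \eqref{mu0}, $l=l_1$ being prime forces $\{v-u,v+u\}\subset\{\pm 2,\pm 2l_1\}$, giving at most two choices of $v$ per $u$. For \eqref{mu1} and \eqref{mu2}, the divisor count for a fixed ordered pair $(l_1,l_2)$ is $O(1)$; summing over the $\ll\Lambda$ admissible prime pairs with $l_1l_2\leqslant\Lambda$ yields the extra $\Lambda$ factor present in those estimates. For \eqref{mu3} the equation becomes $v^2=u^2+(2l_1l_2)^2$, a Pythagorean relation with prescribed leg $u$; the standard parametrization of Pythagorean triples yields only $\ll\tau(u^2)\ll N^\epsilon$ solutions, and each semi-prime $l_1l_2$ admits $\leqslant 2$ ordered factorizations, thereby avoiding the $\Lambda$-loss. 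Multiplying the lattice bound by these arithmetic counts produces the four stated estimates. The main obstacle is precisely \eqref{mu3}: the naive divisor bound would cost a factor of $\Lambda$, and avoiding it requires exploiting the rigidity of Pythagorean triples with one fixed leg.
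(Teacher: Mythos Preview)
Your argument is correct and follows the same overall strategy as the paper: bound the pair $(a-d,b)$ by counting points of the lattice $\langle 1,z\rangle$ in a disc of radius $\asymp\delta y$ via Lemmas \ref{Countlem1} and \ref{Countlem2}, then control the remaining arithmetic freedom. Your explicit formula $|u_\alpha(z)|^2=((d-a)x-b)^2/y^2+(a+d)^2$ and the ensuing bounds $|u|<|v|\leqslant\delta$, $|ux-b|\leqslant\delta y$ are exactly what the paper obtains from \eqref{mainineq} with $c=0$.

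The one genuine difference is in \eqref{mu3}. The paper fixes $a-d$ and runs through the finitely many divisor shapes $(a,d)\in\{(1,l_1^2l_2^2),(l_1,l_1l_2^2),(l_1^2,l_2^2),\dots\}$, arguing in each shape that a fixed value of $a-d$ pins down $(l_1,l_2)$ up to $\ll N^\epsilon$ choices. You instead recast $v^2-u^2=(2l_1l_2)^2$ as $u^2=(v-2l_1l_2)(v+2l_1l_2)$ and read off $\ll\tau(u^2)\ll N^\epsilon$ solutions directly. Your route is cleaner and avoids the case split; the paper's route is more parallel to its treatment of the generic case (where a Pell-equation argument plays the analogous role). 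Either way the $\Lambda$-loss is avoided, which is the point.
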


\begin{proof}
By \eqref{mainineq}, we need to count the number of matrices $\alpha = \left(\begin{array}{cc}a & b \\0 & d\end{array}\right)$ such that 
\begin{align}
|(a-d)z+b|\ll \delta y
\end{align}
for all the cases such that $ad = l_1$, $ad = l_1l_2$, $ad = l_1l_2^2$ and $ad = l_1^2l_2^2$.\\

We again consider the lattice $\braket{1,z}$ of covolume $y$ and shortest length at least $N^{-1/2}$ in $\mathbb{C}$. By \eqref{CP}, in each case, we have $\ll 1+\frac{\delta y}{N^{-1/2}} + \frac{\delta^2y^2}{y}$ possible values of $(a-d,b)$. In the first case, we have either $a=1$ or $d=1$ since $ad=l_1$, which gives rise of $O(1)$ possible matrices. In the next two cases, we have $O(\Lambda)$ possible values of $d$ because $ad=l_1l_2$ and $ad=l_1l_2^2$ respectively. In the last case, since both $l_1, l_2$ are primes, we have either $(a=1, d=l_1^2l_2^2)$ or $(a=l_1,d=l_1l_2^2)$ or $(a=l_1^2, d=l_2^2)$, or equivalent configurations. In each configuration, and for a given value $a-d$, there are $\ll N^{\epsilon}$ many pairs of $(a,d)$. Therefore, the proof is completed.
\end{proof}

\section{The Estimation of Parabolic Matrices}
In this section, we establish the upper bound of sum over parabolic matrices. The treatment in \cite{HT3} doesn't apply to this case, since $|u_{\alpha}(z)|^{-k}$ decays much slower than the geometric side of pre-trace formula in Maa{\ss} form case. We need a more careful discussion here.\\


Denote by $A_0(N) \backslash \mathbb{H}^2$ the fundamental domain of Atkin-Lehner operators.
\begin{lem}\label{para}
Let $z \in A_0(N) \backslash \mathbb{H}^2$, $N^{-O(1)} \ll y \ll 1$ and $k \geqslant 2$, we have that
\begin{align}
\sum_{\substack{\alpha \in G_l(N)\\ \alpha \text{ parabolic}}} |u_{\alpha}(z)|^{-k} \ll_{\epsilon} \theta(l)2^{-k}l^{(-k+1)/2}\(y +N^{-1/3}y^{1/3}+N^{-5/3}y^{-4/3}+N^{-1}\)N^{\epsilon},
\end{align}
where $\theta(l) =1$ when $l$ is a perfect square and $\theta(l) =0$ otherwise. Furthermore, the implied constant does not depend on $k$.
\end{lem}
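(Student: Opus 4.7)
A parabolic matrix $\alpha \in G_l(N)$ satisfies $(a+d)^2 = 4l$, which forces $l$ to be a perfect square; setting $l = m^2$, the two cases $a+d = \pm 2m$ are symmetric, so I may assume $a+d = 2m$ up to a factor of $2$. Writing $a = m+s$, $d = m-s$, the remaining constraints become $bc = -s^2$ with $N | c$. A direct calculation from $u_\alpha(z) = j(\alpha, z)(\overline z - \alpha z)/\Im z$ yields
\begin{align*}
|u_\alpha(z)|^2 = \frac{R^2}{y^2} + 4m^2, \qquad R = c(x^2+y^2) - 2sx - b,
\end{align*}
which gives the trivial bound $|u_\alpha|^{-k} \leq (2m)^{-k}$ and sharper bounds when $|R|$ is large. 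When $l$ is not a perfect square, the sum is empty, accounting for the factor $\theta(l)$.

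I split the integer triples $(s, b, c)$ satisfying the constraints into three subfamilies. Case (a): $c = 0$, which forces $s = 0$ with $b \in \mathbb{Z}$ arbitrary. Case (b): $s = 0$ and $c \neq 0$, so $b = 0$ and $c$ is a nonzero multiple of $N$. Case (c): $s \neq 0$; then $bc \neq 0$, and the constraint $c | s^2$ combined with $N | c$ and the squarefreeness of $N$ forces $N | s$. Writing $s = Ns'$, $c = Nc_1$, and $b = -Ns'^2/c_1$ subject to the integrality $c_1 | Ns'^2$, this case reduces to pairs $(c_1, s') \in \mathbb{Z}_{\neq 0}^2$ satisfying the divisibility.

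Case (a) is immediate: summing $(b^2/y^2 + 4m^2)^{-k/2}$ over $b \in \mathbb{Z}$, split at $|b| = 2my$, gives $\ll 2^{-k} m^{-k+1} y$---matching the first term in the claim. Case (b) is similar; summing over $c_1 \neq 0$ and using $|z|^2 \geq 1/N$ from Lemma \ref{Countlem2} produces a contribution absorbed into the $y$ and $N^{-1}$ terms. Case (c) is the main effort. Substituting the parameterization yields $R = N|c_1 z - s'|^2/c_1$, so the condition $|u_\alpha| \leq \delta$ becomes $|c_1 z - s'|^2 \leq c_1 T/N$ with $T := y\sqrt{\delta^2 - 4m^2}$. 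I plan to apply Lemma \ref{Countlem1} to the lattice $\Lambda = \langle 1, z\rangle \subset \mathbb{C}$, whose shortest vector has length $\geq N^{-1/2}$ by Lemma \ref{Countlem2}, while exploiting the squarefreeness-driven divisor condition $c_1 | Ns'^2$ to restrict $s'$ to an arithmetic progression with modulus $\geq \sqrt{c_1/N}$. Summing over $c_1$ in the admissible range $|c_1| \leq T/(Ny^2)$ yields polynomial bounds for $M_p(z, m^2, \delta)$ in $T, N, y$, and $M_p^{(c)}$ vanishes outright for $\delta < \sqrt{4m^2 + N^2 y^2}$.

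Finally, I recover $S = \sum_\alpha |u_\alpha|^{-k}$ by the integration-by-parts identity
\begin{align*}
S = k \int_{2m}^\infty \delta^{-k-1} M_p(z, m^2, \delta)\, d\delta,
\end{align*}
and a convexity interpolation (AM-GM or H\"older) between the two competing polynomial estimates for $M_p^{(c)}$, exploiting the cutoff threshold, is expected to produce the cube-root type intermediate terms $N^{-1/3} y^{1/3}$ and $N^{-5/3} y^{-4/3}$. Uniformity in $k$ will follow because all integrals converge absolutely for $k \geq 2$ with constants $k/(k-p)$ bounded for the relevant exponents $p$. The principal obstacle will be case (c): combining the squarefree divisibility with the lattice-point count of Lemma \ref{Countlem1} and the subsequent $\delta$-integration to produce precisely the exponents $1/3, 4/3, 5/3$---while simultaneously keeping the implied constant independent of $k$---demands a delicate balance between several competing count bounds.
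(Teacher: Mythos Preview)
Your approach differs from the paper's in two substantive ways. First, the paper parameterizes parabolic matrices via their fixed cusp: each $\alpha$ fixes some $\mathfrak a=a/c$ with $(a,c)=1$, and conjugating by $\sigma_{\mathfrak a}\in\SL_2(\BZ)$ gives $\alpha=\sigma_{\mathfrak a}\left(\begin{smallmatrix}\sqrt l & t\\0&\sqrt l\end{smallmatrix}\right)\sigma_{\mathfrak a}^{-1}$ and hence $|u_\alpha(z)|=\bigl|2\sqrt l\,yi+t|cz-a|^2\bigr|/y$. The condition $N\mid c^2t$ (with $N$ squarefree) then factors as $rs=N$, $s\mid c$, $r\mid t$, $(c,r)=1$. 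Second, rather than integrating $M_p(z,l,\delta)$ by parts, the paper applies the AM--GM inequality with weights $\alpha+\beta=1$, $k\beta=1+\epsilon$, to pass directly from $|u_\alpha|^{-k}$ to a convergent sum $\sum (r|sc_1z-a|^2)^{-1-\epsilon}$; this is what makes the implied constant uniform in $k$ without tracking Beta-type integrals.

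The key geometric input you are missing is the following. For the range $r>R$ the paper notes that $\left(\begin{smallmatrix}\sqrt r\,a & b'/\sqrt r\\ \sqrt r\,sc_1 & \sqrt r\,d'\end{smallmatrix}\right)\in A_0(N)$, and since $z$ lies in the Atkin--Lehner fundamental domain, acting by this matrix cannot increase $\Im z$; this forces $r|sc_1z-a|^2\geq 1$, which is strictly stronger than the lattice-minimum bound $|sc_1z-a|^2\geq 1/N$ from Lemma~\ref{Countlem2} that you invoke. The paper then treats $r>R$ (using this bound together with Lemma~\ref{Countlem1} dyadically) and $s\geq N/R$ (opening $|sc_1z-a|^2$ directly) separately, and optimizes at $R=N^{5/3}y^{4/3}$; it is exactly this optimization that produces the exponents $N^{-1/3}y^{1/3}$ and $N^{-5/3}y^{-4/3}$. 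Your plan for case~(c) --- lattice counting on $\langle 1,z\rangle$ plus a residue restriction on $s'$ from $c_1\mid Ns'^2$, followed by an unspecified ``convexity interpolation'' --- does not visibly recover this: with only the $N^{-1/2}$ lattice minimum you will be off by a power of $N$ in the main range, and no amount of interpolation between two weak bounds will manufacture the missing saving. Either graft the Atkin--Lehner inequality $r|sc_1z-a|^2\geq 1$ onto your parameterization (which effectively forces you to reintroduce the $rs=N$ split), or adopt the paper's cusp parameterization outright.
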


\begin{proof} When $l$ is not a square, there is no parabolic matrix by definition. Let $l$ be a square. Let $\alpha$ be an matrix in the sum. Since $\alpha$ is parabolic, there is a cusp $\mathfrak{a} \in P^1(\mathbb{Q})$ which is fixed by $\alpha$. Moreover, one can assume that $\mathfrak{a} =\frac{a}{c}$ for some $a,c \in \mathbb{Z}$. By the definition, when $a,c \neq 0$, we can assume that $(a,c)=1$. Let $\sigma_{\mathfrak{a}}$ be a 2-by-2 matrix such that $\sigma_{\mathfrak{a}}.\infty = \mathfrak{a}$ and
$$\sigma_{\mathfrak{a}} = \left(\begin{array}{cc}a & b \\ c & d\end{array}\right) \in SL_2(\mathbb{Z}),$$
for some $b,d\in \mathbb{Z}$.\\

Consider $\alpha\' = \sigma_{\mathfrak{a}}^{-1}\alpha\sigma_{\mathfrak{a}}$. We have that $\alpha \'.\infty =\infty$. This shows that $\alpha \'$ is an upper-triangular matrix. Since it is parabolic with determinant $l$, it must be of the form
$$\alpha \' = \pm\begin{pmatrix}\sqrt{l} & t \\0 & \sqrt{l}\end{pmatrix}.$$

For each $\alpha$, we have found an upper-triangular matrix $\alpha \' $ through the adjoint action of $\sigma_{\mathfrak{a}}$. Then we count the sum over $\alpha$s by parameterizing them as pairs $(\alpha \' , \sigma_{\mathfrak{a}})$.\\

From the equation $\alpha = \sigma_{\mathfrak{a}}\alpha\' \sigma_{\mathfrak{a}}^{-1}$, we obtain that 
$$\alpha = \left(\begin{array}{cc}\sqrt{l}-act & a^2t \\-c^2t & \sqrt{l} +act \end{array}\right).$$
Since $\alpha \in G_l(N)$, we have $N|c^2t$. Furthermore, since $N$ is square-free, we have $r,s \in \mathbb{Z}$ such that $rs=N$, and $s|c$, $(c,r)=1$ and $r|t$.

When $t=0$, all the $\alpha = \pm\left(\begin{array}{cc}\sqrt{l} & 0 \\0 & \sqrt{l}\end{array}\right)$ are the same. 
When $t \neq 0$ and $c=0$, we set $a=1$. When $t \neq 0$ and $a=0$, we set $c=1$. Moreover, $\left|u_{\alpha}(z)\right| = \left|2\sqrt{l}yi+t|cz-a|^2\right|y^{-1}$.

Therefore, we have
\begin{align}
\sum_{\substack{\alpha \in G_l(N)\\ \alpha \text{ parabolic}}} |u_{\alpha}(z)|^{-k} &\ll 2^{-k}l^{-k/2}
+\sum_{t\neq 0}\frac{y^k}{\left|2\sqrt{l}yi+t\right|^k}+\sum_{N|t, t\neq 0}\frac{y^k}{\left|2\sqrt{l}yi+t|z|^2\right|^k}
+\sum_{\substack{a,c,t\neq 0\\ \text{s.t. }\alpha \in G_l(N)}}\frac{y^k}{\left|2\sqrt{l}yi+t|cz-a|^2\right|^k}\\
& \ll 2^{-k}l^{-k/2}+\sum_{t\neq 0}\frac{y^k}{\(2\sqrt{l}y\)^{k\alpha}|t|^{k\beta}}+\sum_{N|t, t\neq 0}\frac{y^k}{\(2\sqrt{l}y\)^{k\alpha}|t|z|^2|^{k\beta}}
+\sum_{\substack{a,c,t\neq 0\\ \text{s.t. }\alpha \in G_l(N)}}\frac{y^k}{\(2\sqrt{l}y\)^{k\alpha}\(t|cz-a|^2\)^{k\beta}},\label{para_ineq}
\end{align}
by Arithmetic-Geometric Mean Inequality for some positive $\alpha, \beta$ such that $\alpha+\beta=1$. Moreover, the implied constant is absolute and independent of $k$.\\
 
Now let $k\beta = 1+\epsilon$ for some positive $\epsilon<\frac{1}{2}$. By noticing that $|z|^2\geqslant 1/N$ when $z$ is in the fundamental domain, the sum of first three terms is easy to obtain.
Let $t=rt_1$ and $c=sc_1$ in the fourth sum, then $(sc_1, ra)=1$ by the choices of $a,c,r,s$. Then \eqref{para_ineq} is bounded by 
\begin{align}\label{para_ineq2}
  \ll_{\epsilon}  2^{-k}\(l^{-k/2} + l^{-(k-1)/2}y(yl)^{\epsilon}+\sum_{rs=N}\sum_{\substack{c_1,a\\ (sc_1, ra)=1}}\frac{\(l^{\frac{1}{2}}y\)^{1+\epsilon}}{l^{\frac{k}{2}}\(r|sc_1z-a|^2\)^{1+\epsilon}}\).
\end{align}

Let $1 \leqslant R\leqslant N$. Break the $r,s$ sum apart as
 \begin{align}
\( \sum_{\substack{rs=N\\r > R}}+ \sum_{\substack{rs=N\\s \geqslant N/R}}\)\sum_{\substack{c_1,a\\(sc_1, ra)=1}}\frac{\(l^{\frac{1}{2}}y\)^{1+\epsilon}}{l^{\frac{k}{2}}\(r|sc_1z-a|^2\)^{1+\epsilon}}.
 \end{align}
 
First consider the case that $r > R$. Since $z$ is in the fundamental domain, there are integers $b\'$ and $d\'$ such that 
\begin{align}
\Im\(\left(\begin{array}{cc}\sqrt{r}a & b\'/\sqrt{r} \\\sqrt{r}sc_1 & \sqrt{r}d\'\end{array}\right).z\) = \frac{y}{r|sc_1z-a|^2}\leqslant y,
\end{align}
which implies that $r|sc_1z-a|^2 \geqslant 1$. Applying Lemmas \ref{Countlem1}, \ref{Countlem2} to lattice $\braket{1,z}$, we consider the
value of $|sc_1z-a|^2$ dyadically to obtain 
\begin{align}
\sum_{\substack{rs=N\\r > R}}\sum_{\substack{c_1,a\\ (sc_1, ra)=1}}\frac{\(l^{\frac{1}{2}}y\)^{1+\epsilon}}{l^{\frac{k}{2}}\(r|sc_1z-a|^2\)^{1+\epsilon}}\ll_{\epsilon} N^{\epsilon}\(1+\frac{N^{1/2}}{R^{1/2}}+\frac{1}{Ry}\)(l^{1/2}y)^{1+\epsilon}l^{-\frac{k}{2}}.
\end{align}

Next consider the case that $s \geqslant N/R$. We open the norm square to obtain
\begin{align}
\sum_{\substack{rs=N\\s \geqslant N/R}}\sum_{\substack{c_1,a\\ (sc_1,ar)=1}}\frac{\(l^{\frac{1}{2}}y\)^{1+\epsilon}}{l^{\frac{k}{2}}\(r|sc_1z-a|^2\)^{1+\epsilon}}&= \sum_{\substack{rs=N\\s \geqslant N/R}}\sum_{\substack{c_1,a\\ (sc_1,ar)=1}}\frac{\(l^{\frac{1}{2}}y\)^{1+\epsilon}}{l^{\frac{k}{2}}r^{1+\epsilon}\((sc_1x-a)^2+(sc_1y)^2\)^{1+\epsilon}}\\
& \ll  \sum_{\substack{rs=N\\s \geqslant N/R}}\(\sum_{\substack{|sc_1x-a|<1\\ (sc_1,ar)=1}}\frac{\(l^{\frac{1}{2}}y\)^{1+\epsilon}}{l^{\frac{k}{2}}r^{1+\epsilon}(sc_1y)^{2+2\epsilon}} + \sum_{\substack{|sc_1x-a|\geqslant 1\\ (sc_1,ar)=1}}\frac{\(l^{\frac{1}{2}}y\)^{1+\epsilon}}{l^{\frac{k}{2}}r^{1+\epsilon}\(|sc_1x-a|sc_1y\)^{1+\epsilon}}\)\\
&\ll_{\epsilon} N^{\epsilon}l^{-\frac{k}{2}}\(\(\frac{l^{\frac{1}{2}}R}{N^{2}y}\)^{1+\epsilon}+\(\frac{l^{\frac{1}{2}}}{N}\)^{1+\epsilon}\).
\end{align}

We then choose $R = N^{5/3}y^{4/3}$ to complete the proof.
\end{proof}

\section{The Proof of Theorem \ref{sup}}
By \eqref{zregion}, it suffices to consider the case that $y \geqslant \frac{\sqrt{3}}{2N}$. By Proposition \ref{trivial-sup}, when $z \in A_0(N) \backslash \mathbb{H}^2$ and $\Im z > N^{-2/3}$ we have $\left|y^{k/2}f(z)\right| \ll k^{\frac{1}{4}+\epsilon}N^{-\frac{1}{6}+\epsilon}\braket{f, f}^{1/2}$. Thus, we only need to show the sup-norm when $z \in A_0(N) \backslash \mathbb{H}^2$ and $\frac{\sqrt{3}}{2}N^{-1} \leqslant \Im(z) \leqslant N^{-2/3}$.\\

In \eqref{supineq}, one has
\begin{align}
|y_l| \ll 
\begin{cases}
L, & l=1, \\ 1, & l=l_1 \text{ or }l_1l_2\text{ or }l_1l_2^2\text{ or }l_1^2l_2^2\text{ with }L<l_1,l_2<2L\text{ primes,}\\ 0, & \text{ otherwise}.
\end{cases}
\end{align}

Next, we consider the contribution of upper-triangular, parabolic and generic matrices separately on the right hand side of \eqref{supineq}. Since $\delta$ is always larger than $2\sqrt{l}$, all the $k$-aspect implied constant of the symbol $\ll$ below is $2^{-k}$.

\subsubsection{Upper-triangular}
When $l=1$, we choose $\Lambda = 1$ in \eqref{mu0}, then this part contributes $\ll$\\ $N^{\epsilon}L\(1+N^{1/2}y+y\)$. When $l = l_1$, via \eqref{mu0} again, then the upper bound is $ \ll N^{\epsilon}L^{-1/2}\(1+L^{1/2}N^{1/2}y+Ly\)$. When $l =l_1l_2$, via \eqref{mu1}, the upper bound is $\ll  N^{\epsilon}L^{-1}\(L+L^2N^{1/2}y+L^3y\)$. When $l=l_1l_2^2$, via \eqref{mu2} the upper bound is $\ll  N^{\epsilon}L^{-3/2}\(L+L^{5/2}N^{1/2}y+L^4y\)$. When $l=l_1^2l_2^2$, via \eqref{mu3} the upper bound is $\ll  N^{\epsilon}L^{-2}\(1+L^2N^{1/2}y+L^4y\)$. Therefore, the total contribution is $\ll N^{\epsilon}\(L+LN^{1/2}y+L^{5/2}y\)$. Notice that $k > 3$, so every integral is convergent.

 \subsubsection{Parabolic}
From Lemma \ref{para}, we know that when $l=1,l_1^2$, the upper bound is $$\ll L\(y+N^{-1/3}y^{1/3}+N^{-5/3}y^{-4/3}\)N^{\epsilon},$$ and when $l=l_1^2l_2^2$,  the upper bound is $$\ll L^{2}\(y+N^{-1/3}y^{1/3}+N^{-5/3}y^{-4/3}\)N^{\epsilon}.$$ When $l$ is not a square, there is no contribution from parabolic case. Hence the total contribution from generic case is $\ll  L^{2}\(y+N^{-1/3}y^{1/3}+N^{-5/3}y^{-4/3}\)N^{\epsilon}$.
 
 \subsubsection{Generic}
 When $l=1$, via \eqref{m*1}, the upper bound is $\ll N^{\epsilon}L\((Ny)^{-1}+N^{-1/2}+N^{-1}\)$. 
 When $l=l_1$, via \eqref{m*1}, the upper bound is $\ll N^{\epsilon}L^{-1/2}\(L(Ny)^{-1}+L^{3/2}N^{-1/2}+L^2N^{-1}\)$. When $l=l_1l_2$, via \eqref{m*1}, the upper bound is $\ll N^{\epsilon}L^{-1}\(L^2(Ny)^{-1}+L^{3}N^{-1/2}+L^4N^{-1}\)$.
 
  When $l=l_1l_2^2$, via \eqref{m*3}, the upper bound is $\ll N^{\epsilon}L^{-3/2}\(L^{3/2}(Ny)^{-1}+L^{3}N^{-1/2}+L^{9/2}N^{-1}\)$. When $l=l_1^2l_2^2$, via \eqref{m*2}, the upper bound is $\ll N^{\epsilon}L^{-2}\(L^2(Ny)^{-1}+L^{4}N^{-1/2}+L^6N^{-1}\)$. Hence the total contribution from generic case is $\ll  N^{\epsilon}\(L(Ny)^{-1} + L^2N^{-1/2} + L^4N^{-1}\)$. For the convergence, we need to use Lemma \ref{CON} when $\delta$ is sufficiently large.\\ 
 
Therefore, we choose $L=N^{1/3}$ in \eqref{supineq} to obtain
$$\frac{\left|y^{k/2}f_i(z)\right|^2}{\braket{f_i,f_i}} \ll kN^{-1/3+\epsilon},$$
which implies Lemma \ref{sup}.\\


\end{document}